\newcommand{\excise}[1]{}
\newtheorem{thm}{Theorem}[section]
\newtheorem{lemma}[thm]{Lemma}
\newtheorem{cor}[thm]{Corollary}
\newtheorem{prop}[thm]{Proposition}
\theoremstyle{definition}
\newtheorem{remark}[thm]{Remark}
\newtheorem{defn}[thm]{Definition}
\numberwithin{equation}{section}
\newcommand{\ring}[1]{\ensuremath{\mathbb{#1}}}
\renewcommand\>{\rangle}
\newcommand\<{\langle}
\newcommand\RR{\ring{R}}
\newcommand\ZZ{\ring{Z}}
\DeclareMathOperator\Ap{Ap} 
\begin{document}

\mbox{}
\title{Ap\'ery sets of shifted numerical monoids}

\author{Christopher O'Neill}
\address{Mathematics Department\\University of California Davis\\Davis, CA 95616}
\email{coneill@math.ucdavis.edu}

\author{Roberto Pelayo}
\address{Mathematics Department\\University of Hawai`i at Hilo\\Hilo, HI 96720}
\email{robertop@hawaii.edu}

\subjclass[2010]{Primary: 20M14, 05E40.}

\keywords{numerical semigroup; computation; quasipolynomial}

\date{\today}

\begin{abstract}
A numerical monoid is an additive submonoid of the non-negative integers.  Given a numerical monoid $S$, consider the family of ``shifted'' monoids $M_n$ obtained by adding $n$ to each generator of $S$.  In this paper, we characterize the Ap\'ery set of $M_n$ in terms of the Ap\'ery set of the base monoid $S$ when $n$ is sufficiently large.  We give a highly efficient algorithm for computing the Ap\'ery set of $M_n$ in this case, and prove that several numerical monoid invariants, such as the genus and Frobenius number, are eventually quasipolynomial as a function of $n$.  
\end{abstract}

\maketitle


\section{Introduction} \label{sec:intro}


The factorization theory of numerical monoids -- co-finite, additive submonoids of the non-negative integers -- has enjoyed much recent attention; in particular, invariants such as the minimum factorization length, delta set, and $\omega$-primality have been studied in much detail \cite{numericalsurvey}.  These measures of non-unique factorization for individual elements in a numerical monoid all exhibit a common feature: eventual quasipolynomial behavior.  In many cases, this eventual behavior is periodic (i.e. quasiconstant) or quasilinear, and this pattern always holds after some initial ``noise'' for small monoid elements.  

In this paper, we describe quasipolynomial behavior of certain numerical monoid invariants over parameterized monoid families.  Unlike previous papers, which studied how factorization invariants change element-by-element (e.g., minimum factorization length \cite{elastsets}), we investigate how a monoid's properties change as the generators vary by a shift parameter.  More specifically, we study ``shifted'' numerical monoids of the form 
$$M_n =  \<n, n+r_1, \ldots, n + r_k\>$$
for $r_1 < \cdots < r_k$, and find explicit relationships between the Frobenius number, genus, type, and other properties of $M_n$ and $M_{n + r_k}$ when $n > r_k^2$.  As with the previous element-wise investigations of invariant values, our monoid-wise analysis reveals eventual quasipolynomial behavior, this time with respect to the shift parameter $n$.  

The main result of this paper is Theorem~\ref{t:aperyshifted}, which characterizes the Ap\'ery set of~$M_n$ (Definition~\ref{d:apery}) for large $n$ in terms of the Ap\'ery set of the monoid $S = \<r_1, \ldots, r_k\>$ at the base of the shifted family.  Ap\'ery sets are non-minimal generating sets that concisely encapsulate much of the underlying monoid structure, and many properties of interest can be recovered directly and efficiently from the Ap\'ery set, making it a sort of ``one stop shop'' for computation.  We utilize these connections in Section~\ref{sec:applications} to derive relationships between properties of $M_n$ and $M_{n + r_k}$ when $n$ is sufficiently large.  

One of the main consequences of our results pertains to computation.  Under our definition of $M_n$ above, every numerical monoid is a member of some shifted family of numerical monoids.  While Ap\'ery sets of numerical monoids (and many of the properties derived from them) are generally more difficult to compute when the minimal generators are large, our results give a way to more efficiently perform these computations by instead computing them for the numerical monoid $S$, which has both smaller and fewer generators than $M_n$.  In fact, one surprising artifact of the algorithm described in Remark~\ref{r:aperyruntimes} is that, in a shifted family $\{M_n\}$ of numerical monoids, the computation of the Ap\'ery set of $M_n$ for $n > r_k^2$ is typically significantly faster than for $M_n$ with $n \leq r_k^2$, even though the former has larger generators.   We discuss this and further computational consequences in Remark~\ref{r:computeshifted}, including implementation of our algorithm in the popular \texttt{GAP} package \texttt{numericalsgps} \cite{numericalsgpsgap}.


\section{Background}
\label{sec:background}

In this section, we recall several definitions and results used in this paper.  For more background on numerical monoids, we direct the reader to~\cite{numerical}.  

\begin{defn}\label{d:numericalmonoid}
A \emph{numerical monoid} $M$ is an additive submonoid of $\ZZ_{\ge 0}$.  When we write $M = \<n_1, \ldots, n_k\>$, we assume $n_1 < \cdots < n_k$.  We say $M$ is \emph{primitive} if $\gcd(n_1, \ldots, n_k) = 1$.  
A \emph{factorization} of an element $a \in M$ is an expression 
$$a = z_1n_1 + \cdots + z_kn_k$$
of $a$ as a sum of generators of $M$, which we often represent with the integer tuple $\vec z = (z_1, \ldots, z_k) \in \ZZ_{\ge 0}^k$.  The \emph{length} of a factorization $\vec z$ of $a$ is the number
$$|\vec z| = z_1 + \cdots + z_k$$
of generators appearing in $z$.  The set of factorizations of $a$ is denoted $\mathsf Z_M(a) \subset \ZZ_{\ge 0}^k$, and the set of factorization lengths is denoted $\mathsf L_M(a) \subset \ZZ_{\ge 0}$.  
\end{defn}

\begin{defn}\label{d:apery}
Let $M$ be a numerical monoid.  Define the \emph{Ap\'ery set} of $x \in M$ as
$$\Ap(M,x) = \{m \in M : m-x \in \ZZ \setminus M\}$$
and the \emph{Ap\'ery set} of $M$ as $\Ap(M) = \Ap(M;n_1)$, where $n_1$ is the smallest nonzero element of $M$.  
Note that under this definition, $\left|\Ap(M;x)\right| = x/\gcd(M)$.  
\end{defn}

\begin{defn}\label{d:genus}
Suppose $M$ is a numerical monoid with $\gcd(M) = 1$.  The \emph{genus} of $M$ is the number $g(M) = |\ZZ_{\ge 0} \setminus M|$ of positive integers lying outside of $M$.  The largest integer $F(M) = \max(\ZZ_{\ge 0} \setminus M)$ outside of $M$ is the \emph{Frobenius number} of $M$.  For a non-primitive monoid $T = dM$ with $d \ge 1$, define $g(T) = d \cdot g(M)$ and $F(T) = d \cdot F(M)$.  
\end{defn}

Theorem~\ref{t:minlenquasi} appeared in \cite{elastsets} for primitive, minimally generated numerical monoids, and in \cite{shiftyminpres} for general numerical monoids.  We state the latter version here.  

\begin{thm}[{\cite{elastsets,shiftyminpres}}]\label{t:minlenquasi}
Suppose $M = \<n_1, \ldots, n_k\>$ is a numerical monoid.  The function $\mathsf m:M \to \ZZ_{\ge 0}$ sending each $a \in M$ to its shortest factorization length satisfies
$$\mathsf m(a + n_k) = \mathsf m(a) + 1$$
for all $a > n_{k-1}n_k$.  
\end{thm}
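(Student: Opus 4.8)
My plan is to prove the equality by establishing the two inequalities $\mathsf m(a + n_k) \le \mathsf m(a) + 1$ and $\mathsf m(a + n_k) \ge \mathsf m(a) + 1$ separately, noting that only the second uses the hypothesis $a > n_{k-1}n_k$. The first inequality I would get for free and without any size restriction: given a shortest factorization $\vec z = (z_1, \ldots, z_k)$ of $a$, the tuple $(z_1, \ldots, z_{k-1}, z_k + 1)$ is a factorization of $a + n_k$ of length $|\vec z| + 1 = \mathsf m(a) + 1$, so $\mathsf m(a + n_k) \le \mathsf m(a) + 1$.

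For the reverse inequality I would set $b = a + n_k$ and fix a shortest factorization $\vec z = (z_1, \ldots, z_k)$ of $b$, so $|\vec z| = \mathsf m(b)$. The reduction is the following: if I can show $z_k \ge 1$, then deleting one copy of $n_k$ gives a factorization $(z_1, \ldots, z_{k-1}, z_k - 1)$ of $b - n_k = a$ of length $\mathsf m(b) - 1$, whence $\mathsf m(a) \le \mathsf m(b) - 1$, i.e.\ $\mathsf m(a + n_k) \ge \mathsf m(a) + 1$. So the entire theorem reduces to the claim that no shortest factorization of $b$ can avoid the largest generator $n_k$, under the hypothesis $b > n_{k-1}n_k$ (which holds since $a > n_{k-1}n_k$ forces $b = a + n_k > n_{k-1}n_k$).

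To establish the claim I would argue by contradiction with a pigeonhole exchange. Suppose $z_k = 0$, so $b = z_1 n_1 + \cdots + z_{k-1}n_{k-1}$ uses only the $k-1$ smallest generators, and write $m = z_1 + \cdots + z_{k-1} = |\vec z|$ for the number of generators used with multiplicity. Since each generator here is at most $n_{k-1}$, we have $b \le n_{k-1}m$, which combined with $b > n_{k-1}n_k$ yields $m > n_k$. Listing the $m$ generators in any order as $n_{i_1}, \ldots, n_{i_m}$ and forming the partial sums $s_0 = 0, s_1, \ldots, s_m$, there are $m + 1 > n_k$ of them, so two must be congruent modulo $n_k$, say $s_p \equiv s_q$ with $p < q$; then $n_{i_{p+1}} + \cdots + n_{i_q} = c\,n_k$ is a positive multiple of $n_k$. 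Replacing these $q - p$ generators by $c$ copies of $n_k$ gives a new factorization of $b$, and since $n_{i_j} \le n_{k-1} < n_k$ we get $c\,n_k = \sum_{j=p+1}^{q} n_{i_j} \le (q-p)n_{k-1} < (q-p)n_k$, forcing $c < q - p$; thus the new factorization is strictly shorter, contradicting minimality of $\vec z$.

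The hard part will be this last paragraph: one must check that the hypothesis $a > n_{k-1}n_k$ is precisely what guarantees enough small generators are present (the bound $m > n_k$) for the pigeonhole to produce a congruent pair, and that the strict inequality $n_{k-1} < n_k$ is exactly what makes the exchange length-\emph{decreasing} rather than length-preserving. The upper bound and the ``remove one $n_k$'' reduction are routine; the pigeonhole exchange is the genuine content, and getting the bookkeeping of the swap (number of generators removed versus copies of $n_k$ added) right is where the care is needed.
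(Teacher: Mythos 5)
Your proof is correct, but note that there is nothing in the paper to compare it against: Theorem~\ref{t:minlenquasi} is stated as background and imported without proof from the cited references \cite{elastsets,shiftyminpres}. Judged on its own terms, your argument is sound and complete. The upper bound $\mathsf m(a+n_k)\le \mathsf m(a)+1$ is the trivial append-a-generator step, and your reduction of the lower bound to the claim that a shortest factorization of $b=a+n_k$ must use $n_k$ is correct. The pigeonhole exchange is also carried out properly: if $z_k=0$, then $n_{k-1}m \ge b > n_{k-1}n_k$ forces $m > n_k$, the $m+1$ partial sums produce a consecutive block of generators summing to $c\,n_k$ with $c\ge 1$, and the strict inequality $n_{k-1}<n_k$ (guaranteed by the paper's convention $n_1 < \cdots < n_k$ in Definition~\ref{d:numericalmonoid}) yields $c < q-p$, so the swap strictly shortens the factorization and contradicts minimality. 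Your argument in fact proves slightly more than the theorem needs, namely that \emph{every} shortest factorization of an element exceeding $n_{k-1}n_k$ contains the largest generator; this is the same structural mechanism (trading many small generators for few copies of $n_k$) that underlies the cited proofs, so your write-up serves as a clean, self-contained, elementary substitute for the external references.
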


\subsection*{Notation}
Through the remainder of this paper, $r_1 < \cdots < r_k$ and $n$ are non-negative integers, $d = \gcd(r_1, \ldots, r_k)$, and 
$$S = \<r_1, \ldots, r_k\> \qquad \text{ and } \qquad M_n = \<n, n + r_1, \ldots, n + r_k\>$$
denote additive submonoids of $\ZZ_{\ge 0}$.  Unless otherwise stated, we assume $n > r_k$ and $\gcd(n,d) = 1$ so $M_n$ is primitive and minimally generated as written, but we do \textbf{not} make any such assumptions on $S$.  Note that choosing $n$ as the first generator of $M_n$ ensures that every numerical monoid falls into exactly one shifted family.

\section{Ap\'ery sets of shifted numerical monoids}
\label{sec:aperysets}

The main results in this section are Theorem~\ref{t:aperyshifted}, which expresses $\Ap(M_n)$ in terms of $\Ap(S;n)$ for $n$ sufficiently large, and Proposition~\ref{p:largeapery}, which characterizes $\Ap(S;n)$ for large $n$ in terms of $\ZZ_{\ge 0} \setminus S$.  In addition to the numerous consequences in Section~\ref{sec:applications}, these two results yield an algorithm to compute $\Ap(M_n)$ for large $n$; see Remark~\ref{r:aperyruntimes}.


\begin{lemma}[{\cite[Theorem~3.4]{shiftyminpres}}]\label{l:mesalemma}
Suppose $n > r_k^2$.  If $\vec a$ and $\vec b$ are factorizations of an element $m \in M_n$ with $|\vec a| < |\vec b|$, then some factorization $\vec b'$ with $|\vec b| = |\vec b'|$ has $b'_0 > 0$.  
\end{lemma}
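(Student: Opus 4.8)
The plan is to translate the whole problem from the shifted monoid $M_n$ into the base monoid $S$, where length information becomes transparent. Writing a factorization $\vec c = (c_0,\dots,c_k)$ of $m$ with $g_i = n + r_i$ (and $r_0 = 0$), one has $m = \sum_i c_i(n+r_i) = |\vec c|\,n + \sum_{i\ge 1} c_i r_i$, so that $\sum_{i\ge 1} c_i r_i = m - |\vec c|\,n$ depends only on the length $\ell := |\vec c|$ and not on the particular factorization. Thus the tuple $(c_1,\dots,c_k)$ is an $S$-factorization of $x_\ell := m - \ell n$ of length $\ell - c_0$, and conversely every $S$-factorization of $x_\ell$ of length $L \le \ell$ lifts to an $M_n$-factorization of $m$ of length $\ell$ with $c_0 = \ell - L \ge 0$. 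Under this dictionary, the assertion that some $\vec b'$ with $|\vec b'| = |\vec b|$ has $b_0' > 0$ is exactly the assertion that $x_\ell$, with $\ell = |\vec b|$, admits an $S$-factorization of length strictly less than $\ell$; that is, that the shortest $S$-factorization length $\mathsf m_S(x_\ell)$ satisfies $\mathsf m_S(x_\ell) < \ell$. (Concretely, any length-reducing trade in $S$, a relation $r_{i_1}+\cdots+r_{i_p} = r_{j_1}+\cdots+r_{j_q}$ with $q < p$, lifts to the length-preserving $M_n$-relation $g_{i_1}+\cdots+g_{i_p} = (p-q)g_0 + g_{j_1}+\cdots+g_{j_q}$, introducing $p-q > 0$ copies of $g_0 = n$.) So the lemma reduces to the single inequality $\mathsf m_S(m - \ell n) < \ell$.

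To prove this inequality I would feed in the shorter factorization $\vec a$. If $b_0 > 0$ already we are done, so assume $b_0 = 0$; then $\vec b$ exhibits $\mathsf m_S(x_\ell) \le \ell$, and we must rule out equality. Setting $t := |\vec b| - |\vec a| \ge 1$, the factorization $\vec a$ provides an $S$-factorization of $m - |\vec a|\,n = x_\ell + tn$ of length at most $|\vec a| = \ell - t$. Comparing the two through the elementary value/length bounds for $S$-factorizations (namely $L r_1 \le y \le L r_k$ for a length-$L$ factorization of $y$), the length-$\ell$ factorization of $x_\ell$ gives $x_\ell \ge \ell r_1$, while the short factorization of $x_\ell + tn$ gives $x_\ell + tn \le (\ell - t) r_k$. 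Combining yields $t(n + r_k) \le \ell(r_k - r_1)$, whence $\ell > n/r_k > r_k$ by the hypothesis $n > r_k^2$; more importantly, it shows $x_\ell \le \ell r_k - t(n + r_k)$, so $x_\ell$ is far smaller than $\ell r_k$, i.e.\ it is ``cheap per unit of length.''

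The final and hardest step is to convert this size deficit into an actual short factorization of $x_\ell$, contradicting $\mathsf m_S(x_\ell) = \ell$. Here I would invoke the quasilinear behavior of shortest length in $S$ from Theorem~\ref{t:minlenquasi}: once an element lies past $r_{k-1}r_k$, adding $r_k$ increases $\mathsf m_S$ by exactly $1$, so in the stable range $\mathsf m_S(y) = y/r_k + (\text{a bounded, } r_k\text{-periodic correction})$. Feeding $x_\ell \le \ell r_k - t(n + r_k)$ with $n > r_k^2$ into this estimate forces $\mathsf m_S(x_\ell) \le x_\ell/r_k + (\text{correction}) \le \ell - t(n+r_k)/r_k + (\text{correction}) < \ell$, since $t(n+r_k)/r_k > r_k$ dominates the bounded correction. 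I expect the main obstacle to lie precisely in this last quantitative comparison: controlling the periodic correction term against the shift, which is exactly the role of the calibrated hypothesis $n > r_k^2$, and handling the case where $x_\ell$ has not yet entered the stable range $y > r_{k-1}r_k$. That case may require bounding $\mathsf m_S$ by hand on the finitely many residue representatives, or re-deriving the needed growth estimate directly rather than quoting Theorem~\ref{t:minlenquasi} verbatim.
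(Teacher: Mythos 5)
First, a point of comparison: the paper does not prove Lemma~\ref{l:mesalemma} at all; it imports it from \cite[Theorem~3.4]{shiftyminpres}, so your attempt has to be judged on its own terms and against that external argument. Your first three steps are correct and match the standard approach: the dictionary between length-$\ell$ factorizations of $m$ in $M_n$ and $S$-factorizations of $x_\ell = m - \ell n$ of length at most $\ell$ (the same correspondence the paper uses inside Theorem~\ref{t:aperyshifted}), the reduction of the lemma to the single inequality $\mathsf m_S(x_\ell) < \ell$, and the bound $x_\ell \le \ell r_k - t(n+r_k)$ extracted from the shorter factorization $\vec a$. The genuine gap is the final step, and it sits exactly where you predicted: Theorem~\ref{t:minlenquasi} cannot deliver the estimate you need. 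Quasilinearity only says that on each residue class the quantity $\mathsf m_S(y) - y/r_k$ is \emph{eventually constant}; it gives no usable bound on that constant. Bounding it at a base point $y_0$ near $r_{k-1}r_k$ by the trivial estimate $\mathsf m_S(y_0) \le y_0/r_1$ gives a correction of order $r_{k-1}r_k/r_1$, which can exceed the threshold $r_k+1$ that your inequality $t(n+r_k)/r_k > r_k+1$ can absorb. There are two further obstructions you gesture at but do not resolve: descending from $x_\ell$ in steps of $r_k$ requires knowing $x_\ell - jr_k \in S$ at each step, which Theorem~\ref{t:minlenquasi} does not provide; and $x_\ell$ need not lie in the stable range at all, since you only control $x_\ell \ge \ell r_1$ with $\ell > n/r_k$, and $\ell r_1$ can be well below $r_{k-1}r_k$.

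What fills the hole is a direct combinatorial fact, not quasilinearity. Among any $r_k$ generators chosen with repetition from $\{r_1,\dots,r_{k-1}\}$, pigeonhole on partial sums modulo $r_k$ produces a nonempty subcollection of some size $j \le r_k$ whose sum is a positive multiple $cr_k$ of $r_k$; since each $r_i < r_k$, necessarily $c < j$, so trading the subcollection for $c$ copies of $r_k$ strictly shortens the factorization. Consequently every minimal-length $S$-factorization uses fewer than $r_k$ generators other than $r_k$, which yields the uniform bound $\mathsf m_S(y) < y/r_k + r_k$ for \emph{every} $y \in S$, with no stable-range hypothesis; plugging this into your inequality gives $\mathsf m_S(x_\ell) < \ell - t(n+r_k)/r_k + r_k < \ell$ when $n > r_k^2$, closing the argument. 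This trading lemma, not Theorem~\ref{t:minlenquasi}, is the engine of the cited proof: applied to $\vec b$ directly, it produces $\vec b'$ (replace $j$ generators $n + r_{i_1}, \dots, n+r_{i_j}$ by $c$ copies of $n+r_k$ and $j - c > 0$ copies of $n$) whenever $\vec b$ has at least $r_k$ coordinates among indices $1,\dots,k-1$, while your inequality $x_\ell \le \ell r_k - t(n+r_k)$ is precisely what rules out the complementary case. So your reduction is sound and your diagnosis of the difficulty was accurate, but as written the proposal leaves the one essential lemma unproved, and the route you propose for it would not succeed.
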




\begin{remark}\label{r:homogeneous}
Lemma~\ref{l:mesalemma} was also proven in \cite[Corollary~3.6]{vu14} and subsequently improved in \cite[Theorem~6.3]{homogeneousnumerical}, both with strictly higher bounds on the starting value of~$n$.  The latter source proved that such numerical monoids are \emph{homogeneous}, meaning every element of the Ap\'ery set has a unique factorization length.  This property appears as part of Theorem~\ref{t:aperyshifted} along with a characterization of the unique length in terms of~$S$.  
\end{remark}

\begin{thm}\label{t:aperyshifted}
If $n > r_k^2$ and $dn \in S$, then 
$$\Ap(M_n) = \{i + \mathsf m_S(i) \cdot n \mid i \in \Ap(S;dn)\}$$
where $\mathsf m_S$ denotes the minimum factorization length in $S$.  Moreover, we have 
$$\mathsf L_{M_n}(i + \mathsf m_S(i) \cdot n) = \{\mathsf m_S(i)\}$$
for each $i \in \Ap(S;dn)$.  
\end{thm}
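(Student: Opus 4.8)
The plan is to build an explicit bijection $\Phi\colon\Ap(M_n)\to\Ap(S;dn)$ whose inverse is the map $i\mapsto i+\mathsf m_S(i)\,n$ in the statement. Throughout I write a factorization of an element of $M_n$ as a tuple $(z_0,z_1,\dots,z_k)$, where $z_0$ is the multiplicity of the generator $n$. The basic dictionary is that such a tuple factors $m=z_0 n+\sum_{\alpha\ge 1}z_\alpha(n+r_\alpha)=Ln+i$, where $L=z_0+\cdots+z_k$ is the length and $i=\sum_{\alpha\ge1}z_\alpha r_\alpha\in S$; thus factorizations of $m$ of length $L$ correspond exactly to factorizations of $m-Ln$ in $S$ of length $L-z_0\le L$. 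I would record at the outset the two cardinalities that drive the argument: since $\gcd(n,d)=1$ we have $|\Ap(M_n)|=n$, and since $dn\in S$ with $\gcd(S)=d$ we have $|\Ap(S;dn)|=dn/d=n$.

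The first real step is homogeneity, which comes directly from Lemma~\ref{l:mesalemma}. If $m\in\Ap(M_n)$ then no factorization of $m$ can have $z_0\ge 1$, since removing one copy of $n$ would place $m-n$ in $M_n$. If $m$ had two factorizations of different lengths, Lemma~\ref{l:mesalemma} would produce one of the larger length with $z_0>0$, a contradiction; hence every factorization of $m$ has a single length $\ell=\mathsf m_{M_n}(m)$ and satisfies $z_0=0$. Writing $i:=m-\ell n$, any such factorization exhibits $i\in S$ with an $S$-factorization of length $\ell$, and if $i$ had a strictly shorter $S$-factorization we could pad it with copies of $n$ to get a length-$\ell$ factorization of $m$ with $z_0>0$, again impossible. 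Therefore $\mathsf m_S(i)=\ell$ and $m=i+\mathsf m_S(i)\,n$, which already yields the ``moreover'' clause $\mathsf L_{M_n}(m)=\{\mathsf m_S(i)\}$. This defines $\Phi(m)=i$, and $\Phi$ is injective because $m$ is recovered from $i$ as $i+\mathsf m_S(i)\,n$.

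The crux is to show $\Phi(m)\in\Ap(S;dn)$, i.e.\ that $i-dn\notin S$. Suppose instead $i-dn\in S$. Since $dn$ equals $d$ copies of the generator $n$ in $M_n$, I can lift an $S$-factorization of $i-dn$ of length $s=\mathsf m_S(i-dn)$ and append $\ell+d-s$ copies of $n$ to obtain a factorization of $m$ of length $\ell+d$, valid as soon as $s\le \ell+d$. This is a second factorization of $m$ whose length differs from $\ell$, so Lemma~\ref{l:mesalemma} again forces a factorization of $m$ with $z_0>0$, contradicting $m\in\Ap(M_n)$. Thus everything hinges on the inequality $\mathsf m_S(i-dn)\le \mathsf m_S(i)+d$; I expect this comparison of minimum lengths in $S$ to be the main obstacle. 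My plan for it is to invoke the quasilinearity of $\mathsf m_S$ from Theorem~\ref{t:minlenquasi}: for arguments exceeding $r_{k-1}r_k$ the function $\mathsf m_S$ increases by exactly one under addition of $r_k$, so along the class of $i$ modulo $r_k$ it behaves like $(\,\cdot\,)/r_k$ up to a bounded, residue-dependent constant. Because $n>r_k^2$ forces $dn/r_k>r_k$, the ``$dn$-sized'' change in the leading term dominates the bounded fluctuation, giving $\mathsf m_S(i-dn)\le \mathsf m_S(i)$ and hence the displayed inequality; the role of the hypothesis $n>r_k^2$ is precisely to guarantee this slack.

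Finally I would assemble the pieces. The homogeneity step gives an injection $\Phi\colon\Ap(M_n)\to\Ap(S;dn)$ with $\Phi^{-1}(i)=i+\mathsf m_S(i)\,n$, and the crux step shows it indeed lands in $\Ap(S;dn)$. As both sets have exactly $n$ elements, $\Phi$ is a bijection; applying $\Phi^{-1}$ to all of $\Ap(S;dn)$ yields $\Ap(M_n)=\{\,i+\mathsf m_S(i)\,n : i\in\Ap(S;dn)\,\}$, and the length statement $\mathsf L_{M_n}(i+\mathsf m_S(i)\,n)=\{\mathsf m_S(i)\}$ is exactly the homogeneity recorded above.
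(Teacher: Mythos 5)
Your architecture runs in the opposite direction from the paper's, and that choice is exactly what creates your difficulty. The paper fixes $i \in \Ap(S;dn)$, sets $a = i + \mathsf m_S(i)\,n$, and proves $a \in \Ap(M_n)$ directly; then it counts, just as you do. In that direction, the fact that $a$ has no factorization of length $\ell > \mathsf m_S(i)$ is immediate: $a - \ell n = i - (\ell - \mathsf m_S(i))n \notin S$ follows from $i - dn \notin S$ together with $dn \in S$ and a gcd observation (if $i - tn \in S$ then $d \mid t$, so $i - tn \in S$ would force $i - dn \in S$). No comparison of minimum lengths in $S$ is ever needed. Your direction --- mapping $\Ap(M_n)$ into $\Ap(S;dn)$ --- makes the homogeneity step easy (your use of Lemma~\ref{l:mesalemma} there is correct, and matches the paper's), but it forces you to prove that $i = m - \ell n$ satisfies $i - dn \notin S$, which is where the real work lands.

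That step is where your proposal has a genuine gap. You reduce it to the inequality $\mathsf m_S(i - dn) \le \mathsf m_S(i) + d$ and justify it by saying Theorem~\ref{t:minlenquasi} makes $\mathsf m_S$ equal to $x/r_k$ plus a ``bounded, residue-dependent constant,'' with the fluctuation ``dominated'' because $dn/r_k > r_k$. Two problems. First, Theorem~\ref{t:minlenquasi} gives no bound at all on the size of those constants --- only that they exist --- so ``bounded'' buys you nothing: for the domination argument you need them bounded by roughly $r_k$, i.e.\ you need the quantitative statement $\mathsf m_S(x) \le x/r_k + (r_k - 1)$ for all $x \in S$. That statement is true, but it is a separate lemma, not a consequence of quasilinearity: write $x = w + t r_k$ with $w \in \Ap(S;r_k)$, and note that any factorization of $w$ of length $\ge r_k$ has two partial sums congruent modulo $r_k$, forcing $w - r_k \in S$, a contradiction; hence $\mathsf m_S(w) \le r_k - 1$ and $\mathsf m_S(x) \le t + r_k - 1 \le x/r_k + r_k - 1$. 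Combined with $\mathsf m_S(i) \ge i/r_k$ and $dn > r_k^2$, this yields $\mathsf m_S(i-dn) < \mathsf m_S(i)$ and closes your argument. Second, Theorem~\ref{t:minlenquasi} describes $\mathsf m_S$ only for arguments exceeding $r_{k-1}r_k$, so your quasilinearity mechanism says nothing about $\mathsf m_S(i - dn)$ when $i - dn$ lies below that threshold; the Ap\'ery/pigeonhole bound above covers that case, your stated mechanism does not. So the proof as written is incomplete precisely at the step you flagged as the main obstacle; it is fixable, but only by adding a lemma absent from your write-up --- and avoided entirely by the paper's choice of direction.
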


\begin{proof}
Let $A = \{i + \mathsf m_S(i) \cdot n \mid i \in \Ap(S;dn)\}$.  Each element of $\Ap(S;dn)$ is distinct modulo $n$, since each element of $\{i/d \mid i \in \Ap(S;dn)\}$ is distinct modulo $n$ and $\gcd(n,d) = 1$.  As such, each element of $A$ is distinct modulo $n$, since
$$i + \mathsf m_S(i) \cdot n \equiv i \bmod n$$
for $i \in \Ap(S;dn)$.  This implies $|A| = n$, so it suffices to show $A \subseteq \Ap(M_n)$.  

Fix $i \in \Ap(S;dn)$, and let $a = i + \mathsf m_S(i) \cdot n$.  If $\vec s \in \mathsf Z_S(i)$ has minimal length, then
$$a = i + \mathsf m_S(i) \cdot n = \sum_{i = 1}^k s_ir_i + |\vec s| \cdot n = \sum_{i = 1}^k s_i(n + r_i),$$
meaning $a \in S$.  In this way, each minimal length factorization $\vec s$ for $i \in S$ corresponds to a factorization of $a \in M_n$ with first component zero.  More generally, for each $\ell \ge 0$, there is a natural bijection 
$$\begin{array}{rcl}
\{\vec z \in \mathsf Z_{M_n}(a) : |\vec z| = \ell\} &\longrightarrow& \{\vec s \in \mathsf Z_S(a - \ell n) : |\vec s| \le \ell\} \\
(z_0, z_1, \ldots, z_k) &\longmapsto& (z_1, \ldots, z_k)
\end{array}$$
between the factorizations of $a \in M_n$ of length $\ell$ and the factorizations of $a - \ell n \in S$ of length at most $\ell$, obtained by writing 
$$a = z_0n + \sum_{i = 1}^k z_i(n + r_i) = \ell n + \sum_{i = 1}^k z_ir_i$$
and subsequently solving for $a - \ell n$.  

Now, since $a - \mathsf m_S(i) \cdot n = i \in \Ap(S;dn)$, whenever $\ell > \mathsf m_S(i)$ we have 
$$a - \ell n = (a - \mathsf m_S(i) \cdot n) - (\ell - \mathsf m_S(i))n = i - (\ell - \mathsf m_S(i))n \notin S,$$
so $a$ has no factorizations in $M_n$ of length $\ell$.  Moreover, $a$ cannot have any factorizations in $M_n$ with length strictly less than $\mathsf m_S(i)$ since Lemma~\ref{l:mesalemma} would force some factorization of length $\mathsf m_S(i)$ to have nonzero first coordinate.  
Putting these together, we see $\mathsf L_{M_n}(a) = \{\mathsf m_S(i)\}$, meaning every factorization of $a$ in $M_n$ has first coordinate zero.  As such, we conclude $a \in \Ap(M_n)$.  
\end{proof}

\begin{prop}\label{p:largeapery}
If $dn \in S$ and $dn > F(S)$, then $\Ap(S;dn) = \{a_0, \ldots, a_{n-1}\}$, where
$$a_i = \left\{\begin{array}{ll}
di & \text{ if } di \in S \\
di + dn & \text{ if } di \notin S
\end{array}\right.$$
and $d = \gcd(S)$.  In particular, this holds whenever $n > r_k^2$ as in Theorem~\ref{t:aperyshifted}.  
\end{prop}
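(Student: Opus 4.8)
The plan is to recall the definition $\Ap(S;dn) = \{m \in S : m - dn \notin S\}$ and to verify that the proposed set $\{a_0, \ldots, a_{n-1}\}$ consists of exactly the Ap\'ery elements by checking three things: that each $a_i$ lies in $S$, that each $a_i - dn \notin S$, and that the $a_i$ are the \emph{smallest} such elements in their congruence classes modulo $dn$ (equivalently, that there are exactly $n$ of them and they are distinct modulo $dn$). The hypotheses $d = \gcd(S)$ and $dn \in S$ mean that $S$ lives inside $d\ZZ_{\ge 0}$, so every element of $S$ is a multiple of $d$; since $|\Ap(S;dn)| = dn/\gcd(S) = n$ by the remark in Definition~\ref{d:apery}, I expect the $n$ candidates $a_0, \ldots, a_{n-1}$ to biject with the $n$ residue classes that actually occur, namely the classes of $0, d, 2d, \ldots, (n-1)d$ modulo $dn$.

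First I would treat the two cases in the definition of $a_i$ separately. If $di \in S$, then $a_i = di \in S$ trivially, and I need $a_i - dn = di - dn = d(i-n) < 0$ (since $0 \le i \le n-1$), hence not in $S$; so $a_i \in \Ap(S;dn)$. If $di \notin S$, then $a_i = di + dn$; here I would use $dn > F(S)$ together with the fact that $di + dn$ is a multiple of $d$ exceeding $F(S)$ to conclude $a_i = di + dn \in S$, and then observe $a_i - dn = di \notin S$ by assumption, so again $a_i \in \Ap(S;dn)$. The role of $dn > F(S)$ is precisely to guarantee that every multiple of $d$ at least $dn$ lies in $S$, which is what makes the ``add $dn$'' correction land back inside the monoid.

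Next I would establish that these $n$ elements are pairwise distinct modulo $dn$. Since $a_i \equiv di \pmod{dn}$ in both cases, and the values $0, d, 2d, \ldots, (n-1)d$ are pairwise distinct modulo $dn$ (their pairwise differences are nonzero multiples of $d$ of absolute value at most $(n-1)d < dn$), the $a_i$ are distinct modulo $dn$. Combined with $|\Ap(S;dn)| = n$ and the fact that the Ap\'ery set contains exactly one representative of each occupied residue class modulo $dn$, the inclusion $\{a_0, \ldots, a_{n-1}\} \subseteq \Ap(S;dn)$ together with the equality of cardinalities forces equality, completing the proof. The final sentence follows because $n > r_k^2$ already forces $dn > r_k \ge F(S)$ comfortably, and the hypothesis $dn \in S$ is shared with Theorem~\ref{t:aperyshifted}.

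The main obstacle is the case $di \notin S$: I must be careful that $a_i = di + dn$ is genuinely the \emph{minimal} element of $S$ in its residue class modulo $dn$, not merely \emph{some} element of $S$ congruent to $di$. This is where $dn > F(S)$ does the real work --- it ensures $di + dn \in S$ while no smaller nonnegative representative (namely $di$ itself, the only candidate below $dn$ in that class) lies in $S$ --- so the characterization via ``smallest element of $S$ in each class'' is what I would verify most carefully, rather than the routine membership and distinctness checks.
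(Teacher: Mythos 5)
Your verification of the main claim is correct and follows essentially the same route as the paper's (very terse) proof: the paper simply asserts that each $a_i$ lies in $\Ap(S;dn)$ and that the $a_i$ are distinct modulo $n$, then concludes from the cardinality count $|\Ap(S;dn)| = dn/\gcd(S) = n$. Your two-case membership check (using $F(dM) = d \cdot F(M)$ to see that every multiple of $d$ exceeding $F(S)$ lies in $S$) and your distinctness argument fill in exactly those assertions; the only cosmetic difference is that you check distinctness modulo $dn$, which avoids the standing hypothesis $\gcd(n,d)=1$ that the paper's ``distinct modulo $n$'' claim relies on.

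There is, however, one genuine error, in your justification of the final sentence: you claim that $n > r_k^2$ forces $dn > r_k \ge F(S)$, but the inequality $r_k \ge F(S)$ is false in general --- for instance, $S = \langle 6,9,20 \rangle$ has $F(S) = 43 > 20 = r_k$ (this is exactly the monoid in the paper's Table~\ref{tb:aperyruntimes}). What is true, and what the proposition implicitly relies on, is the classical bound $F(S) < r_k^2$: for primitive $S$ this follows from standard Frobenius-number bounds (e.g., the Erd\H{o}s--Graham bound $F(S) \le 2 r_{k-1}\lfloor r_k/k \rfloor - r_k < r_k^2$), and the non-primitive case follows by scaling, since $F(S) = d \cdot F(S/d) < d\,(r_k/d)^2 \le r_k^2$. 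With that corrected, the chain $dn \ge n > r_k^2 > F(S)$ gives the ``in particular'' clause, the hypothesis $dn \in S$ being shared with Theorem~\ref{t:aperyshifted} as you note. This is a step the paper itself leaves unjustified, but as written your justification of it is incorrect.
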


\begin{proof}
Clearly $a_i \in \Ap(S;dn)$ for each $i \le n-1$.  Moreover, the values $a_0, \ldots, a_{n-1}$ are distinct modulo $n$, so the claimed equality holds.  
\end{proof}

\begin{remark}\label{r:aperyruntimes}
Theorem~\ref{t:aperyshifted} and Proposition~\ref{p:largeapery} yield an algorithm to compute the Ap\'ery set of numerical monoids $M = \<n_1, \ldots,  n_k\>$ that are \emph{sufficiently shifted} (that is, if $n_1 > (n_k - n_1)^2$) by first computing the Ap\'ery set for $S = \<n_i - n_1 : 2 \le i \le k\>$.  
Table~\ref{tb:aperyruntimes} compares the runtime of this algorithm to the one currently implemented in the \texttt{GAP} package \texttt{numericalsgps} \cite{numericalsgpsgap}.  The strict decrease in runtime halfway down the last column corresponds to values of $n$ where $n > r_k^2 = 20^2 = 400$, so we may use Proposition~\ref{p:largeapery} to express $\Ap(S;dn)$ directly in terms of the gaps of $S$.  This avoids performing extra modular arithmetic computations for each Ap\'ery set element, as is normally required to compute $\Ap(S;dn)$ from $\Ap(S)$.  

An implementation will appear in the next release of the \texttt{numericalsgps} package, and will not require any special function calls.  In particular, computing the Ap\'ery set of a monoid $M$ that is sufficiently shifted will automatically use the improved Ap\'ery set algorithm, and resort to the existing algorithm in all other cases.  
\end{remark}

\begin{table}
\begin{tabular}{|l|l|l|l|l|}
$n$ & $M_n$ & \texttt{GAP} \cite{numericalsgpsgap} & Remark~\ref{r:aperyruntimes} \\
\hline
$50$ & $\<50,56,59,70\>$ & 1 ms & 1 ms \\
$200$ & $\<200, 206, 209, 220\>$ & 30 ms & 30 ms \\
$400$ & $\<400, 406, 409, 420\>$ & 170 ms & 170 ms \\
$1000$ & $\<1000, 1006, 1009, 1020\>$ & 3 sec & 1 ms \\
$5000$ & $\<5000, 5006, 5009, 5020\>$ & 17 min & 1 ms \\ 
$10000$ & $\<10000, 10006, 10009, 10020\>$ & 3.6 hr & 1 ms \\ 
\end{tabular}
\medskip
\caption{Runtime comparison for computing Ap\'ery sets of the numerical monoids $M_n$ with $S = \<6,9,20\>$.  All computations performed using \texttt{GAP} and the package \texttt{numericalsgps} \cite{numericalsgpsgap}.}
\label{tb:aperyruntimes}
\end{table}

\begin{remark}\label{r:frequency}
Much of the literature on numerical monoids centers around especially ``nice'' families, e.g.\ those that are symmetric~\cite{kunz70}, complete intersection~\cite{completeintersection}, balanced~\cite{balancednumerical}, telescopic~\cite{telescopic}, Arf~\cite{arfnumerical}, or generated by an arithmetic~\cite[Chapter~4]{numerical} or compound~\cite{compseqs} sequence, among many others~\cite{numerical}.  
Such families are usually chosen so that a precise description of some of their properties can be obtained, thereby lending insight into the (often provably intractable) general case.  
Remark~\ref{r:aperyruntimes} identifies \emph{sufficiently shifted} numerical monoids as a new family to add to the list.  
%
\end{remark}

\section{Applications}
\label{sec:applications}

As Ap\'ery sets can be used to easily compute other numerical monoid invariants, the results of Section~\ref{sec:aperysets} can be applied to provide quick computations of and structural results for the Frobenius number, genus (Definition~\ref{d:genus}), and type (Definition~\ref{d:pseudofrob}) of $M_n$ for $n$ sufficiently large.  In particular, we show that each of these are eventually quasipolynomial functions of $n$  (Corollaries~\ref{c:frobshifted}, \ref{c:genusshifted}, and~\ref{c:typeshifted}, respectively).  

\begin{defn}\label{d:quasipolynomial}
A function $f:\ZZ \to \RR$ is an \emph{$r$-quasipolynomial} of degree $\alpha$ if 
$$f(n) = a_\alpha(n)n^\alpha + \cdots + a_1(n)n + a_0(n)$$
for periodic functions $a_0, \ldots, a_\alpha$, whose periods all divide $r$, with $a_\alpha$ not identically~0.  We say $f$ is \emph{eventually quasipolynomial} if the above equality holds for all $n \gg 0$.  
\end{defn}

\begin{cor}\label{c:genusshifted}
For $n > r_k^2$, the function $n \mapsto g(M_n)$ is $r_k$-quasiquadratic in $n$.  
\end{cor}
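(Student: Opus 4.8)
The strategy is to compute the genus of $M_n$ directly from the Apéry set via the standard formula and to show that each piece of that formula is quasiquadratic in $n$. Recall that for a primitive numerical monoid $M$ with smallest generator $n$, the genus can be recovered from the Apéry set $\Ap(M) = \Ap(M;n)$ by
\begin{equation*}
g(M) = \frac{1}{n}\sum_{w \in \Ap(M)} w \;-\; \frac{n-1}{2},
\end{equation*}
a classical identity of Selmer. I would apply this with $M = M_n$ and use Theorem~\ref{t:aperyshifted} to rewrite the sum.

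First I would substitute the description of $\Ap(M_n)$ from Theorem~\ref{t:aperyshifted}. Since $\Ap(M_n) = \{i + \mathsf m_S(i) \cdot n \mid i \in \Ap(S;dn)\}$, the sum over the Apéry set splits as
\begin{equation*}
\sum_{w \in \Ap(M_n)} w = \sum_{i \in \Ap(S;dn)} i \;+\; n \sum_{i \in \Ap(S;dn)} \mathsf m_S(i).
\end{equation*}
Plugging this into Selmer's formula gives $g(M_n) = \sum_i \mathsf m_S(i) + \frac{1}{n}\sum_i i - \frac{n-1}{2}$, so the task reduces to controlling the two sums over $\Ap(S;dn)$ as functions of $n$. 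The term $-\tfrac{n-1}{2}$ already contributes a (quasi)linear piece, and the genuinely quadratic growth must come from the sum of minimum lengths.

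Next I would evaluate both sums using Proposition~\ref{p:largeapery}, which for $n > r_k^2$ gives the elements $a_i$ of $\Ap(S;dn)$ explicitly: $a_i = di$ when $di \in S$ and $a_i = di + dn$ when $di \notin S$. The number of indices $i \in \{0,\dots,n-1\}$ with $di \notin S$ is eventually constant in $n$ (for $dn > F(S)$ these are exactly the gaps of $S$ that are multiples of $d$, a fixed finite set once $n$ is large enough that all such gaps appear), so $\sum_i i$ is a fixed quadratic $\binom{n}{2}d$-type expression plus lower-order corrections, making $\tfrac{1}{n}\sum_i i$ quasilinear. For the length sum I would use that $\mathsf m_S(a_i)$ is determined by $a_i$: for the ``generic'' indices $a_i = di$ I would invoke Theorem~\ref{t:minlenquasi} (with monoid $S$) to conclude $\mathsf m_S(d(i+r_k/d)) = \mathsf m_S(di) + 1$ once $di$ exceeds the threshold $r_{k-1}r_k$, so $\mathsf m_S(di)$ grows quasilinearly in $i$; summing a quasilinear function of $i$ over $i = 0,\dots,n-1$ yields a quasiquadratic function of $n$, with leading coefficient $\tfrac{d}{2r_k}$ (periodic of period dividing $r_k$) and hence not identically zero. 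The finitely many exceptional indices contribute only a bounded, eventually-periodic correction. Assembling these pieces shows $g(M_n)$ has the form required by Definition~\ref{d:quasipolynomial} with all periods dividing $r_k$.

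\textbf{Main obstacle.} The delicate step is the length sum $\sum_{i} \mathsf m_S(a_i)$: I must show $\mathsf m_S(di)$ is itself eventually quasilinear in $i$ with the correct period and then verify that summation preserves quasipolynomiality and promotes the degree by exactly one without introducing a vanishing leading term. Theorem~\ref{t:minlenquasi} gives the recurrence $\mathsf m_S(a+r_k) = \mathsf m_S(a)+1$ past the threshold, which pins down the eventual quasilinear behavior of $\mathsf m_S$ along the arithmetic progression $\{di\}$ (using $\gcd(n,d)=1$ to range over all residues), but one must track the $r_k$-periodic constant terms carefully and handle the initial segment $di \le r_{k-1}r_k$ separately as bounded noise. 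Confirming that the resulting leading coefficient is not identically zero—i.e.\ that the growth is genuinely quadratic rather than degenerating to linear—is what makes the corollary assert quasiquadratic rather than merely quasipolynomial behavior.
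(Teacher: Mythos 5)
Your proposal follows essentially the same route as the paper: the paper computes $g(M_n) = \sum_{a \in \Ap(M_n)} \lfloor a/n \rfloor$, which is just the floor-sum form of the Selmer identity you invoke, and then, exactly as you do, splits this via Theorem~\ref{t:aperyshifted} and Proposition~\ref{p:largeapery} into elementary lower-order parts plus the length sum $\sum_{i \in \Ap(S;dn)} \mathsf m_S(i)$, whose $r_k$-quasiquadratic growth is extracted from Theorem~\ref{t:minlenquasi}. One inaccuracy to fix: the exceptional indices with $di \notin S$ contribute $\sum_{di \notin S} \mathsf m_S(di + dn)$, which is not a ``bounded, eventually-periodic correction'' --- each such term grows like $dn/r_k$ by Theorem~\ref{t:minlenquasi}, so this piece is $r_k$-quasilinear in $n$ (it is the paper's fourth term); since it is still of strictly lower degree than the generic part, your conclusion is unaffected.
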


\begin{proof}
By counting the elements of $\ZZ_{\ge 0} \setminus M_n$ modulo $n$, we can write
$$\begin{array}{r@{}c@{}l}
g(M_n) &{}={}& \displaystyle\sum_{a \in \Ap(M_n)} \left\lfloor \frac{a}{n} \right\rfloor
\end{array}$$
Applying Theorem~\ref{t:aperyshifted} and Proposition~\ref{p:largeapery}, a simple calculation shows that
$$\begin{array}{r@{}c@{}l}
g(M_n) 
&{}={}& \displaystyle\sum_{i \in \Ap(S;dn)} \left\lfloor \frac{i}{n} \right\rfloor + \sum_{i \in \Ap(S;dn)} \mathsf m_S(i) \\
&{}={}& \displaystyle\sum_{i = 0}^{n-1} \left\lfloor \frac{di}{n} \right\rfloor + d \cdot g(S) + \sum_{\substack{i < n \\ di \in S}} \mathsf m_S(di) + \sum_{\substack{i \ge 0 \\ di \notin S}} \mathsf m_S(di + dn).
\end{array}$$
Each of the four terms in the above expression is eventually quasipolynomial in $n$.  Indeed, the first term is $d$-quasilinear in $n$, the second term is independent of $n$, and Theorem~\ref{t:minlenquasi} implies the third and fourth terms are eventually $r_k$-quasiquadratic and $r_k$-quasilinear in $n$, respectively.  This completes the proof.  
\end{proof}

Corollary~\ref{c:frobshifted} is known in more general contexts \cite{rouneparametricfrob,shenparametricfrob}.  The proof given here yields a fast algorithm for computing the Frobenius number of $M_n$ for $n > r_k^2$; see Remark~\ref{r:computeshifted}.  

\begin{cor}\label{c:frobshifted}
For $n > r_k^2$, the function $n \to F(M_n)$ is $r_k$-quasiquadratic in $n$.  
\end{cor}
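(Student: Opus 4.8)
The plan is to mimic the structure of the proof of Corollary~\ref{c:genusshifted}, since the Frobenius number is recoverable from the Ap\'ery set via the same kind of maximization that the genus recovers via summation. The key identity I would use is the classical fact that
$$F(M_n) = \max_{a \in \Ap(M_n)} a - n,$$
valid because the smallest generator of $M_n$ is $n$: the largest gap is obtained by taking the largest Ap\'ery element (which sits atop its residue class modulo $n$) and stepping down by one copy of $n$.

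First I would invoke Theorem~\ref{t:aperyshifted} to rewrite this maximum over $\Ap(M_n)$ as a maximum over $\Ap(S;dn)$, namely
$$F(M_n) = \max_{i \in \Ap(S;dn)} \bigl(i + \mathsf m_S(i) \cdot n\bigr) - n.$$
Next I would apply Proposition~\ref{p:largeapery} to split this maximum according to whether $di \in S$ or $di \notin S$, replacing each $i \in \Ap(S;dn)$ by the explicit value $a_i$. This turns the quantity into a maximum of two finite collections of expressions of the form $di + \mathsf m_S(di)\cdot n$ (for $di \in S$) and $di + dn + \mathsf m_S(di + dn)\cdot n$ (for $di \notin S$), each indexed by $i$ in a fixed finite range. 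The essential input is then Theorem~\ref{t:minlenquasi}, which governs how $\mathsf m_S$ grows: for arguments large relative to the generators of $S$, it increases by exactly $1$ each time $r_k$ is added, so $\mathsf m_S(di)$ behaves $r_k$-quasilinearly in $i$. Substituting this, each individual expression inside the maximum becomes $r_k$-quasiquadratic in $n$.

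The main obstacle will be handling the maximum itself rather than a sum: unlike Corollary~\ref{c:genusshifted}, where the four summands aggregate cleanly, here I must argue that the maximum of these quasiquadratic expressions is again eventually quasiquadratic along each residue class of $n$ modulo $r_k$. I would address this by fixing a residue class of $n$ and observing that, once $n$ is large, the index $i$ achieving the maximum stabilizes: the leading coefficient of each candidate in $n^2$ is determined by the slope of $\mathsf m_S$, so for large $n$ the dominant candidate is the one with the largest such slope, and among those the one with the largest lower-order correction. Because there are only finitely many candidate indices and the comparison between any two of them is eventually constant along each residue class, the argmax is eventually periodic in $n$; on each class the maximum then coincides with a single quasiquadratic expression, giving the desired $r_k$-quasiquadratic behavior for $n \gg 0$.
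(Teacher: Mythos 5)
Your overall route is the same as the paper's: Selmer's formula $F(M_n)=\max(\Ap(M_n))-n$, followed by Theorem~\ref{t:aperyshifted}, Proposition~\ref{p:largeapery}, and Theorem~\ref{t:minlenquasi}. The gap is in how you handle the maximum, specifically the member-type candidates. After splitting via Proposition~\ref{p:largeapery}, only the collection indexed by gaps ($di \notin S$) is a fixed finite family: those $i$ satisfy $di \le F(S)$, and for each such fixed $i$ the expression $di + dn + \mathsf m_S(di+dn)\cdot n$ is indeed $r_k$-quasiquadratic in $n$. But the other collection, $\{\,di + \mathsf m_S(di)\cdot n : di \in S,\ 0 \le i < n\,\}$, is \emph{not} indexed by a fixed finite range; its index set grows with $n$. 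Worse, for a \emph{fixed} $i$ in this collection the expression $di + \mathsf m_S(di)\cdot n$ is linear in $n$ (both $di$ and $\mathsf m_S(di)$ are constants), not quasiquadratic: the quadratic growth of $F(M_n)$ comes precisely from the fact that the competitive indices drift upward with $n$, sitting just below $dn$. So ``there are only finitely many candidate indices,'' ``each individual expression becomes $r_k$-quasiquadratic,'' and ``the index $i$ achieving the maximum stabilizes'' all fail for this half of the Ap\'ery set, and the eventual-periodicity-of-the-argmax conclusion does not follow as written.

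The paper sidesteps this by tracking a single distinguished element rather than comparing all candidates: it takes $a \in \Ap(S;dn)$ maximizing $\mathsf m_S(-)$, writes $F(M_n) = a - n + \mathsf m_S(a)\cdot n$, and observes that when $n$ is replaced by $n+r_k$ the maximizer is replaced by the corresponding element of the new Ap\'ery set, whose $\mathsf m_S$-value grows by a fixed amount by Theorem~\ref{t:minlenquasi}; quasilinearity of $\mathsf m_S$ then yields the quasiquadratic recursion directly. Your argument is repairable along your own lines, but it needs one more idea: reindex the member-type candidates from the top, writing $di = d(n-j)$, note that $d(n-j) \in S$ for all $1 \le j \le r_k$ once $n$ is large, and check that any candidate with $j > r_k$ is dominated by one of the candidates with $j \le r_k$ (use that $\mathsf m_S(x) - x/r_k$ is a bounded, eventually periodic function of $x$ on large elements of $S$, a consequence of Theorem~\ref{t:minlenquasi}). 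For fixed $j$, $\mathsf m_S(d(n-j))$ is $r_k$-quasilinear in $n$, so each of these finitely many reindexed candidates is $r_k$-quasiquadratic in $n$, and only then does your finitely-many-candidates, eventually-constant-comparisons argument go through.
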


\begin{proof}
Let $a$ denote the element of $\Ap(S;dn)$ for which $\mathsf m_S(-)$ is maximal.  Theorem~\ref{t:aperyshifted} and Proposition~\ref{p:largeapery} imply
\begin{center}
$\begin{array}{r@{}c@{}l}
F(M_n) &{}={}& \max(\Ap(M_n)) - n = a - n + \mathsf m_S(a) \cdot n.
\end{array}$
\end{center}
Theorem~\ref{t:minlenquasi} and Proposition~\ref{p:largeapery} together imply $a + r_k$ is the element of $\Ap(S;dn + r_k)$ for which $\mathsf m_S(-)$ is maximal, and quasilinearity of $\mathsf m_S(-)$ completes the proof.  
%
%
\end{proof}

\begin{remark}\label{r:wilfconjecture}
Wilf's conjecture \cite{wilfconjecture}, a famously open problem in the numerical monoids literature, states that for any primitive numerical monoid $S = \<r_1, \ldots, r_k\>$, 
$$F(S) + 1 \le k(F(S) - g(S))$$
To date, Wilf's conjecture has only been proven in a handful of special cases, and remains open in general.  Corollary~\ref{c:wilfshifted} adds monoids of the form $M_n$ for $n > r_k^2$ to the list of special cases in which Wilf's conjecture is known to hold.  
\end{remark}

\begin{defn}[{\cite{questioneliahouwilf}}]\label{d:wilfnumber}
The \emph{Wilf number} $W(S)$ of a numerical monoid $S$ with embedding dimension $k$ is given by 
$$W(S) = k(F(S) - g(S)) - (F(S) + 1).$$
\end{defn}

\begin{cor}\label{c:wilfshifted}
For $n > r_k^2$, the function $n \mapsto W(M_n)$ is $r_k$-quasiquadratic in $n$.  In~particular, $M_n$ satisfies Wilf's conjecture for $n > r_k^2$.  
\end{cor}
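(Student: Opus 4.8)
The plan is to obtain the quasiquadratic claim for $n \mapsto W(M_n)$ as an immediate formal consequence of the results already established for the genus and Frobenius number, and then to deduce Wilf's conjecture from a sign analysis of the resulting expression. First I would unwind the definition: since $M_n$ is primitive and minimally generated as written (by the standing assumptions $n > r_k$ and $\gcd(n,d) = 1$), its embedding dimension is exactly $k+1$, so Definition~\ref{d:wilfnumber} gives
\begin{equation*}
W(M_n) = (k+1)\bigl(F(M_n) - g(M_n)\bigr) - \bigl(F(M_n) + 1\bigr).
\end{equation*}
By Corollaries~\ref{c:frobshifted} and~\ref{c:genusshifted}, both $F(M_n)$ and $g(M_n)$ are $r_k$-quasiquadratic in $n$ for $n > r_k^2$, and any $\ZZ$-linear combination of $r_k$-quasipolynomials of degree at most two is again an $r_k$-quasipolynomial of degree at most two. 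So the quasiquadratic conclusion follows once I confirm that the leading (degree-two) coefficient does not cancel identically; I expect it does not, since $F$ and $g$ have genuinely quadratic growth in $n$ coming from the quasiquadratic terms $\sum \mathsf m_S(\cdot)$ in their formulas.

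Next I would address the second, more substantive assertion, namely that $W(M_n) \ge 0$ for all $n > r_k^2$. The clean route is to verify that the degree-two (and, if needed, degree-one) coefficients of the quasipolynomial $W(M_n)$ are nonnegative, with the leading one strictly positive, so that $W(M_n)$ is eventually positive; but since the statement claims positivity for \emph{all} $n > r_k^2$ and not merely for $n \gg 0$, I would instead argue directly. The natural approach is to use the explicit descriptions from Theorem~\ref{t:aperyshifted} and Proposition~\ref{p:largeapery}: every element of $\Ap(M_n)$ has the form $i + \mathsf m_S(i)\cdot n$, the Frobenius number is realized by the maximizer $a$ of $\mathsf m_S$ over $\Ap(S;dn)$, and the genus is the sum of the quotients $\lfloor (i + \mathsf m_S(i)n)/n\rfloor = \lfloor i/n\rfloor + \mathsf m_S(i)$. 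I would subtract $g(M_n)$ from $F(M_n)$ and exhibit $F(M_n) - g(M_n)$ as a sum over the $n$ Ap\'ery elements of nonnegative ``defect'' terms measuring how far each element sits below the Frobenius element, then bound $(k+1)$ times this sum below by $F(M_n) + 1$.

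The main obstacle is this last inequality: establishing $k(F(M_n) - g(M_n)) \ge F(M_n) + 1$ honestly requires a counting argument rather than pure quasipolynomial bookkeeping, because near-equality in Wilf's conjecture is delicate and the constant term matters. I anticipate the cleanest formulation is the standard one: partition $\ZZ_{\ge 0}$ into the interval $[0, F(M_n)]$ and tail, write $F(M_n) + 1$ as the count of integers in that interval, and bound the number of gaps using that each of the $k+1$ generators of $M_n$ (in particular the $\le k$ generators that are at most $F(M_n)$) forces elements of $M_n$ to appear with controlled density; the quantity $F(M_n) - g(M_n)$ counts the non-gaps in $[0, F(M_n)]$, and one shows the embedding dimension times this count dominates. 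The genuinely new input that makes this work for $n > r_k^2$ is the homogeneity from Remark~\ref{r:homogeneous} together with the explicit Ap\'ery structure, which pins down the distribution of non-gaps tightly enough to push the inequality through; verifying that the bound holds for \emph{every} such $n$, not just asymptotically, is where the care is needed.
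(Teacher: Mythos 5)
There is a genuine gap --- in fact two, and they sit exactly at the points where the paper's (very short) proof does its work. For the quasiquadraticity claim, you correctly reduce $W(M_n) = (k+1)\bigl(F(M_n)-g(M_n)\bigr) - \bigl(F(M_n)+1\bigr)$ to a $\ZZ$-linear combination of $r_k$-quasiquadratics, which gives degree at most two, but you leave non-cancellation of the degree-two coefficient as something you ``expect.'' That verification \emph{is} the paper's proof: it notes that the quadratic coefficients of $n \mapsto F(M_n)$ and $n \mapsto g(M_n)$ are the \emph{constants} $d/r_k$ and $d/(2r_k)$ (these can be read off the proofs of Corollaries~\ref{c:frobshifted} and~\ref{c:genusshifted}: $F(M_n) = a - n + \mathsf m_S(a)\cdot n$ with $\mathsf m_S(a) \sim dn/r_k$, while the dominant term of $g(M_n)$ is $\sum \mathsf m_S(di) \sim dn^2/(2r_k)$). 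Hence the leading coefficient of $W(M_n)$ is $(k+1)\cdot\frac{d}{2r_k} - \frac{d}{r_k} = \frac{d(k-1)}{2r_k}$. Non-cancellation is not automatic: for $k=1$ this coefficient vanishes identically (for two-generated shifted monoids $W(M_n)$ is in fact constant), so ``I expect it does not cancel'' skips the one computation the argument actually turns on.

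For the Wilf assertion, the paper deduces nonnegativity of $W(M_n)$ from the positivity of that same leading coefficient $d(k-1)/(2r_k)$. You raise a fair criticism of this style of argument --- positivity of the leading coefficient only yields $W(M_n)\ge 0$ for $n \gg 0$, not literally for every $n > r_k^2$ --- but your proposed remedy, a direct gap-counting/density argument built on the Ap\'ery description from Theorem~\ref{t:aperyshifted} and Proposition~\ref{p:largeapery}, is only a plan: you yourself flag the inequality $(k+1)\bigl(F(M_n)-g(M_n)\bigr) \ge F(M_n)+1$ as ``the main obstacle'' and never establish it, nor do you explain what the homogeneity property concretely contributes to the count. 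So as written your proposal proves neither assertion of the corollary: the first assertion is missing its decisive coefficient computation, and the second is an unexecuted sketch of a substantially harder argument than the one the paper intends.
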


\begin{proof}
Apply Corollaries~\ref{c:genusshifted} and~\ref{c:frobshifted}, and note that the quadratic coefficients of the maps $n \mapsto F(M_n)$ and $n \mapsto g(M_n)$ are constants $d/r_k$ and $d/2r_k$, respectively.  
\end{proof}

We next examine the pseudo-Frobenius numbers of $M_n$.  

\begin{defn}\label{d:pseudofrob}
An integer $m \ge 0$ is a \emph{pseudo-Frobenius number} of a numerical monoid $S$ if $m \notin S$ but $m + n \in S$ for all positive $n \in S$.  Denote by $PF(S)$ the set of pseudo-Frobenius numbers of $S$, and by $t(S) = |PF(M_n)|$ the \emph{type} of $S$.  
\end{defn}

\begin{thm}\label{t:pseudofrobshifted}
Given $n \in \ZZ_{\ge 0}$, let $P_n$ denote the set
$$P_n = \{i \in \Ap(S;dn) : a \equiv i \bmod n \text{ for some } a \in PF(M_n)\}.$$
For $n > r_k^2$, the map $P_n \to P_{n + r_k}$ given by
$$\begin{array}{rcl}
i
&\mapsto&
\left\{\begin{array}{ll}
i & \text{if } i \le dn \\
i + r_k & \text{if } i > dn
\end{array}\right.
\end{array}
$$
is a bijection.  In particular, there is a bijection $PF(M_n) \to PF(M_{n + r_k})$.  
\end{thm}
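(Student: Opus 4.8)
The plan is to understand the pseudo-Frobenius numbers of $M_n$ through the lens of the Ap\'ery set structure established in Theorem~\ref{t:aperyshifted}. Recall the standard fact that $m$ is a pseudo-Frobenius number of a numerical monoid $M$ if and only if $m + n_1 \in \Ap(M)$ is a \emph{maximal} element with respect to the partial order $\preceq_M$ defined by $a \preceq_M b \iff b - a \in M$; equivalently, $m + n$ is an element of $\Ap(M_n)$ whose corresponding socle condition holds, namely $m + n + n_j \notin \Ap(M_n)$ in the appropriate sense for every generator. So I would first translate the condition ``$a \in PF(M_n)$'' into a statement about maximal elements of $\Ap(M_n)$ under $\preceq_{M_n}$, and then use Theorem~\ref{t:aperyshifted} to push this back to a condition purely about $\Ap(S;dn)$ and the minimum length function $\mathsf m_S$.

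Concretely, since Theorem~\ref{t:aperyshifted} gives the bijection $i \mapsto i + \mathsf m_S(i)\cdot n$ between $\Ap(S;dn)$ and $\Ap(M_n)$, each residue class mod $n$ represented in $P_n$ corresponds to exactly one Ap\'ery element $i + \mathsf m_S(i)\cdot n$ that is $\preceq_{M_n}$-maximal. The key step is to reinterpret the covering relations: for $a = i + \mathsf m_S(i)\cdot n \in \Ap(M_n)$ and a generator $n + r_j$ of $M_n$, the element $a + (n+r_j)$ lies in $\Ap(M_n)$ precisely when its first-coordinate-zero factorization persists, which by the length bijection in the proof of Theorem~\ref{t:aperyshifted} translates into a statement about whether $i + r_j$ (or $i + r_j - dn$) lies in $\Ap(S;dn)$ with the expected minimum length. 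Thus maximality in $\Ap(M_n)$ becomes a combinatorial condition on $\Ap(S;dn)$ that, crucially, is governed by the quasilinear behavior of $\mathsf m_S$ from Theorem~\ref{t:minlenquasi}.

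Having characterized $P_n$ intrinsically in terms of $S$, I would then check directly that the proposed map is well-defined and bijective by tracking what happens to each $i \in P_n$ as $n$ increases to $n + r_k$. The fixed element $dn$ plays the role of a threshold determined by Proposition~\ref{p:largeapery}: elements $i \le dn$ correspond (via that proposition) to gaps $di \notin S$ shifted up by $dn$, whereas elements $i > dn$ correspond to the ``upper'' copies, and increasing the shift parameter translates the latter by exactly $r_k$ while fixing the former. The content of Theorem~\ref{t:minlenquasi}, applied as in the proof of Corollary~\ref{c:frobshifted} (where $a + r_k$ was shown to track the maximizer of $\mathsf m_S$ across the shift), ensures that the maximality/covering condition defining $P_n$ is preserved under this translation, so the map lands in $P_{n+r_k}$ and is invertible by the evident inverse map. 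The final sentence, the bijection $PF(M_n) \to PF(M_{n+r_k})$, then follows immediately since $|PF(M_n)| = |P_n|$: each pseudo-Frobenius number is recovered from its residue class representative in $P_n$ together with $\mathsf m_S$.

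I expect the main obstacle to be the precise bookkeeping of the covering relations under $\preceq_{M_n}$, specifically proving that $\preceq_{M_n}$-maximality of $i + \mathsf m_S(i)\cdot n$ is equivalent to a clean condition on $i$ that is manifestly stable under the shift $n \mapsto n + r_k$. The subtlety is that adding a generator $n + r_j$ changes \emph{both} the residue-class representative in $S$ (by $r_j$) \emph{and} the length contribution (by $1$), so one must verify that the interaction between the $\mathsf m_S$-jump of Theorem~\ref{t:minlenquasi} and the threshold $dn$ of Proposition~\ref{p:largeapery} does not create or destroy maximal elements as $n$ grows; handling the boundary case $i = dn$ carefully, and confirming that no element crosses the threshold in a way that breaks injectivity, will require the full strength of the hypothesis $n > r_k^2$.
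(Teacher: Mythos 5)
Your outline follows the same route as the paper's proof: rewrite the pseudo-Frobenius condition as a per-generator condition on Ap\'ery elements (for $a = i + \mathsf m_S(i)\cdot n \in \Ap(M_n)$ one has $a - n \in PF(M_n)$ exactly when $a + r_j \in M_n$ for every $j$, which is your $\preceq_{M_n}$-maximality restated), transport it through the bijection of Theorem~\ref{t:aperyshifted}, split at the threshold $dn$ coming from Proposition~\ref{p:largeapery}, and invoke Theorem~\ref{t:minlenquasi} for invariance under $n \mapsto n + r_k$. The gap is that the decisive step is never carried out: you never exhibit the ``clean condition on $i$'' equivalent to maximality, and the assertion that Theorem~\ref{t:minlenquasi} ``applied as in the proof of Corollary~\ref{c:frobshifted}'' preserves it is not an argument --- that corollary only tracks the maximizer of $\mathsf m_S$ over $\Ap(S;dn)$ and says nothing about conditions of the form $a + r_j \in M_n$. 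What your plan needs (and what the paper claims at the corresponding point, via the length-graded bijection in the proof of Theorem~\ref{t:aperyshifted}) is the lemma: $a + r_j \in M_n$ if and only if $a + r_j$ admits a factorization of length exactly $\mathsf m_S(i)$, i.e.\ if and only if $\mathsf m_S(i + r_j) \le \mathsf m_S(i)$, a condition from which $n$ has vanished. Without some such statement, your proposal is an outline of the paper's proof, not a proof; you yourself flag this as ``the main obstacle.''

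Worse, the boundary phenomenon you flag in your final paragraph is not bookkeeping that the hypothesis $n > r_k^2$ disposes of; it is exactly where this per-generator equivalence fails, so the gap cannot be closed in the way you propose. Take $S = \<3,5\>$, $n = 27 > r_k^2$, and $i = 22 = dn - r_k \in \Ap(S;27)$, so $\mathsf m_S(22) = 6$ and $a = 22 + 6\cdot 27 = 184 \in \Ap(M_{27})$ with $M_{27} = \<27,30,32\>$. Then $a + 3 = 187 = 27 + 5\cdot 32$ and $a + 5 = 189 = 7\cdot 27$ both lie in $M_{27}$, so $157 = a - 27 \in PF(M_{27})$ and $22 \in P_{27}$; yet $189$ has no factorization of length $6$, so the equivalence displayed above (and the identical claim inside the paper's own proof) is false here: the membership $a + r_k \in M_{27}$ happens only because $i + r_k$ wraps around to $0$ modulo $n$. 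In $M_{32} = \<32,35,37\>$ the proposed image of $i$ is again $22$ (since $22 \le dn$), with Ap\'ery element $a' = 22 + 6\cdot 32 = 214$; but now $a' + 5 = 219 \notin M_{32}$, and indeed $PF(M_{32}) = \{219,295\}$, so $P_{32} = \{27,39\}$ and $22 \notin P_{32}$. Thus for low elements $i$ with $i + r_k \ge dn$, adding a generator wraps past the threshold in $M_n$ but not in $M_{n+r_k}$, and the map in the statement is not even well defined (only the weaker conclusion $|PF(M_{27})| = |PF(M_{32})| = 2$ survives). Any honest completion of your argument must isolate and analyze this window $dn - r_k \le i < dn$ separately, rather than expect quasilinearity of $\mathsf m_S$ to carry it.
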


\begin{proof}
Fix $i \in \Ap(S;dn)$ and write $a = i + \mathsf m_S(i)n \in \Ap(M_n;n)$.  First, if $i \le dn$, then~$i \in \Ap(S;dn + dr_k)$ by Proposition~\ref{p:largeapery}, so by Theorem~\ref{t:aperyshifted}, 
$$a' = i + \mathsf m_S(i)(n + r_k) = a + \mathsf m_S(i)r_k \in \Ap(M_{n + r_k};n + r_k).$$
Notice that if $a + r_j \in M_n$, then the bijection in the proof of Theorem~\ref{t:aperyshifted} implies $a + r_j$ has a factorization of length $\mathsf m_S(i)$ in $M_n$.  As such, for each $j$ we have $a' + r_j \in M_{n + r_k}$ if and only if $a + r_j \in M_n$.  On the other hand, if $i > dn$, then $i + r_k \in \Ap(S;dn + dr_k)$ by Proposition~\ref{p:largeapery}, so by Theorem~\ref{t:aperyshifted}, 
$$a' = i + r_k + \mathsf m_S(i + r_k)(n + r_k) = a + (n + r_k) + (\mathsf m_S(i) + 1)r_k \in \Ap(M_{n + r_k};n + r_k).$$
Once again, $a' + r_j \in M_{n + r_k}$ if and only if $a + r_j \in M_n$ for each $j$, thus proving the first claim.  The second claim is obtained using the composition
$$PF(M_n) \longrightarrow P_n \longrightarrow P_{n + r_k} \longrightarrow PF(M_{n + r_k})$$
of the above map with two bijections obtained via reduction modulo $n$.  
\end{proof}

\begin{cor}\label{c:typeshifted}
The function $n \mapsto t(M_n)$ is eventually $r_k$-periodic.  In particular, $M_n$~is (pseudo)symmetric if and only if $M_{n+r_k}$ is (pseudo)symmetric. 
\end{cor}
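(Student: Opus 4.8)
The plan is to extract all three assertions directly from the bijection constructed in Theorem~\ref{t:pseudofrobshifted}. First I would record its immediate numerical consequence: for $n > r_k^2$ that bijection gives $t(M_n) = |PF(M_n)| = |PF(M_{n+r_k})| = t(M_{n+r_k})$. Hence the value of $t(M_n)$ depends only on the residue of $n$ modulo $r_k$ once $n$ exceeds $r_k^2$, which is exactly the assertion that $n \mapsto t(M_n)$ is eventually $r_k$-periodic.

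For the symmetric case I would invoke the standard characterization that a numerical monoid $M$ is symmetric precisely when $t(M) = 1$ (see~\cite{numerical}). Since the previous paragraph shows $t(M_n) = t(M_{n+r_k})$, the monoid $M_n$ has a single pseudo-Frobenius number if and only if $M_{n+r_k}$ does, giving the symmetric half of the equivalence with no further work.

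The pseudosymmetric case is where the real content lies, and it is the step I expect to be the main obstacle. Here I would use the characterization that $M$ is pseudosymmetric exactly when $t(M) = 2$ and $PF(M) = \{F(M)/2, F(M)\}$. Preservation of $t = 2$ is already in hand, so the task reduces to transferring the arithmetic relation $F = 2f_1$ between the two pseudo-Frobenius numbers across the bijection, rather than merely matching cardinalities. Concretely, I would write each $f \in PF(M_n)$ as $f = i + (\mathsf m_S(i) - 1)n$ for its residue $i \in P_n$, and then compute the image pseudo-Frobenius number branch by branch via the map of Theorem~\ref{t:pseudofrobshifted}, distinguishing the case $i \le dn$ from $i > dn$. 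Corollary~\ref{c:frobshifted} already identifies the image of the largest pseudo-Frobenius number $F(M_n)$ with $F(M_{n+r_k})$, so the crux is to verify that the smaller pseudo-Frobenius number is carried to exactly one half of $F(M_{n+r_k})$.

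The delicate point, and the reason I single this step out, is that the bijection shifts the two pseudo-Frobenius numbers by \emph{branch-dependent} increments; confirming that the doubling relation survives therefore requires checking that these increments are mutually compatible, and in particular that the parity of $F$ together with the identity $F = 2f_1$ are simultaneously respected. I would expect to handle this by showing that a pseudosymmetric $M_n$ forces its two pseudo-Frobenius residues into compatible branches, so that the explicit formulas line up. Once this compatibility is established the pseudosymmetric equivalence follows, and combining it with the symmetric case yields the full statement.
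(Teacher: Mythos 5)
Your proposal takes the same route as the paper, whose entire proof is the single line ``Apply Theorem~\ref{t:pseudofrobshifted} and \cite[Corollaries~3.11 and~3.16]{numerical}'': the bijection gives $t(M_n)=t(M_{n+r_k})$ for $n>r_k^2$, Corollary~3.11 of \cite{numerical} (symmetric $\iff$ type $1$) gives the symmetric claim, and Corollary~3.16 (pseudosymmetric $\iff$ $PF = \{F/2,\, F\}$) is invoked for the pseudosymmetric claim. Your first two paragraphs reproduce this argument exactly. Your third paragraph then flags something the paper's citation silently assumes: that the bijection of Theorem~\ref{t:pseudofrobshifted} carries the configuration $\{F(M_n)/2,\, F(M_n)\}$ to $\{F(M_{n+r_k})/2,\, F(M_{n+r_k})\}$, which does not follow from matching cardinalities. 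So relative to the paper's own proof you have omitted nothing the authors supply; you have instead isolated the one verification the paper leaves entirely to the reader, and left it unfinished in the same way.

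Since you single this step out as the main obstacle, it is worth recording that your instinct about its delicacy is right. Writing $f = i + (\mathsf m_S(i)-1)n$ for $f \in PF(M_n)$ with residue $i \in P_n$, the two branches of the bijection shift $f$ by genuinely different amounts:
$$f \longmapsto f + (\mathsf m_S(i)-1)r_k \quad (i \le dn), \qquad f \longmapsto f + n + (\mathsf m_S(i)+1)r_k \quad (i > dn),$$
and a short computation with the relation $2f_1 = f_2$ (which forces $i_2 \equiv 2i_1 \bmod n$ and pins down $2\mathsf m_S(i_1) - \mathsf m_S(i_2)$) shows that the doubling relation survives the shift in essentially only one configuration --- both residues in the first branch with $i_2 = 2i_1$ and $\mathsf m_S(i_2) = 2\mathsf m_S(i_1)-1$; in the other branch combinations, transfer of the relation would force an identity like $n = c \cdot r_k$ with $c \le 0$, which is impossible. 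So completing your plan (and making the paper's citation rigorous) amounts to showing that a pseudosymmetric $M_n$ with $n > r_k^2$ can only occur in that one compatible configuration. That is exactly the ``compatibility of branches'' you anticipated, it is real work, and neither your proposal nor the paper's one-line proof actually carries it out.
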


\begin{proof}
Apply Theorem~\ref{t:pseudofrobshifted} and \cite[Corollaries~3.11 and~3.16]{numerical}.  
\end{proof}



\begin{remark}\label{r:computeshifted}
Each of the quantities and properties discussed in this section are usually computed for a general numerical monoid $M$ by first computing an Ap\'ery~set.  Indeed, computing each of these quantities for the monoids in Table~\ref{tb:aperyruntimes} takes only slightly longer than the corresponding Ap\'ery set runtime.  As~such, computing these values using the algorithm discussed in Remark~\ref{r:aperyruntimes} is also significantly faster for $n > r_k^2$.  
\end{remark}


\end{document}